\newcommand{\br}{\mathbb R}
\def\squarebox#1{\hbox to #1{\hfill\vbox to #1{\vfill}}} 
\newcommand{\1}{{\bold 1}}
\newcommand{\RR}{{\mathbb R}} 
\newcommand{\SP}{{\mathbb S}}
\newcommand{\cyl}{{\mathcal C}}
\newcommand{\supp}{\operatorname{supp}}
\newcommand{\eps}{{\varepsilon}} 
\newcommand\ESCS{ESCS}
\theoremstyle{plain}
\newtheorem{thm}{Theorem}
\newtheorem{prop}[thm]{Proposition}
\newtheorem{lem}[thm]{Lemma}
\theoremstyle{definition}
\theoremstyle{remark} \newtheorem{rem}[thm]{Remark}
\newtheorem{defn}[thm]{Definition} 
\numberwithin{equation}{section}
\title[Eigenfunction concentration for polygonal billiards]
{Eigenfunction concentration for polygonal billiards}
\author[A. Hassell]{Andrew Hassell}
\author[L. Hillairet]{Luc Hillairet}
\author[J. Marzuola]{Jeremy Marzuola}
\address{Mathematics Department, Australian National University \\ Canberra, ACT  0200, Australia}
\email{hassell@maths.anu.edu.au}
\address{UMR CNRS 6629-Universit\'{e} de Nantes, 2 rue de la Houssini\`{e}re, \\
BP 92 208, F-44 322 Nantes Cedex 3, France}
\email{Luc.Hillairet@math.univ-nantes.fr}
\address{Applied Mathematics Department, Columbia University \\
200 S.W. Mudd, 500 W. 120th St., New York, NY 10027, USA}
\email{jm3058@columbia.edu}
\keywords{Polygonal billiards, eigenfunction concentration, semiclassical measures, control region}
\subjclass[2000]{35P20}
\thanks{A.H. was partially supported by Discovery Grant DP0771826
from the Australian Research Council.  J.M. was supported by a National Science Foundation Postdoctoral Fellowship and would like to thank Australian National University for generously hosting him at time of the beginning of this research.}
\def\11{{\rm 1~\hspace{-1.4ex}l} }
\def\R{\mathbb R}
\begin{document}    
   
\begin{abstract} In this note, we extend the results on eigenfunction concentration in billiards 
as proved by the third author in \cite{M1}.  
There, the methods developed in Burq-Zworski \cite{BZ3} 
to study eigenfunctions for billiards which have rectangular components were applied.  
Here we take an arbitrary polygonal billiard $B$ and show that eigenfunction mass cannot 
concentrate away from the vertices; in other words, given any neighbourhood $U$ of the vertices, 
there is a lower bound 
$$
\int_U |u|^2 \geq c \int_B |u|^2
$$
for some $c = c(U) > 0$ and any eigenfunction $u$. 
%The results apply also to plane domains, or tori, with slits, with pseudointegrable billiard flow. 
\end{abstract}

\maketitle   

\section{Introduction}
\label{one}

Let $B$ be a plane polygonal domain, not necessarily convex. Let $V$ denote the set of vertices of $B$, and let 
$\Delta_B$ denote the Dirichlet or the Neumann Laplacian on $L^2(B)$. In this note, we will prove the following 

\begin{thm}\label{ThmPoly} 
Let $B$ be as above and let $U$ be any neighbourhood of $V$. Then there exists $c=c(U)> 0$ 
such that, for any $L^2$-normalized eigenfunction $u$ of the 
Dirichlet (or Neumann) Laplacian $\Delta_B$, we have
\begin{equation*}
\int_U |u|^2 \geq c.
\end{equation*}
That is, $U$ is a control region for $B$, in the terminology of \cite{BZ2}. 
\end{thm}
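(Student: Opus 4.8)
The plan is to argue by contradiction, extracting a semiclassical defect measure in the spirit of \cite{BZ3}. Suppose the conclusion fails for some neighbourhood $U$ of $V$: there are $L^2$-normalized eigenfunctions $u_k$, with eigenvalues $\lambda_k^2$, such that $\int_U|u_k|^2\to 0$. First I would dispose of the low-frequency case: if $(\lambda_k)$ has a bounded subsequence, the corresponding $u_k$ lie in a fixed finite-dimensional space (only finitely many eigenvalues lie below any bound, each with finite-dimensional eigenspace), on whose unit sphere $u\mapsto\int_U|u|^2$ is continuous and strictly positive --- a nonzero function in that space, being real-analytic in the interior of $B$, cannot vanish on the open set $U$ --- so $\int_U|u_k|^2$ is bounded below, a contradiction. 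Hence I may assume $\lambda_k\to\infty$ and set $h_k=\lambda_k^{-1}\to 0$, so that $(-h_k^2\Delta_B-1)u_k=0$.

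Next, pass to a subsequence along which $(u_k)$ has a semiclassical defect measure $\mu$, built away from $V$ by the standard construction for boundary value problems (the boundary $\partial B\setminus V$ is a finite union of open segments, hence smooth). Then $\mu$ lives on the characteristic set $\set{|\xi|=1}$; it is a probability measure, because no mass escapes to high frequency (as $\|h_k\nabla u_k\|_{L^2}^2=\|u_k\|_{L^2}^2=1$, for Dirichlet and for Neumann) and, by hypothesis, none escapes to the vertices; and, again by hypothesis, $\mu$ gives zero mass to the set $\pi^{-1}(U)$ of phase points lying over $U$, so $\supp\mu$ misses this open set. The usual propagation argument --- invariance under the straight-line flow at interior points, and under specular reflection at points of $\partial B\setminus V$, both available because $\supp\mu$ avoids $V$ --- shows $\mu$ is invariant under the billiard flow on $S^*(B\setminus V)$. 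A trajectory that reaches a vertex must first enter every neighbourhood of it, so no trajectory in $\supp\mu$ reaches a vertex, in either time direction; thus $\supp\mu$ is a closed, billiard-flow-invariant set of trajectories that stay a fixed distance from $V$ for all time.

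It remains to rule out such a $\mu$, which is the heart of the matter --- and a purely dynamical argument will not do, since polygonal billiards carry flow-invariant probability measures supported away from the vertices (bouncing-ball channels between parallel sides, the Fagnano orbit of an acute triangle, and other cylinders of parallel trajectories), and these are exactly the sets on which quasimodes concentrate. So I must use that $\mu$ comes from genuine eigenfunctions. I would do this by a transverse, second-microlocal analysis near a trapped channel: pass to an ergodic component of $\mu$, localize near the channel carrying it, and unfold $B$ by successive reflections along one of its trajectories --- legitimate precisely because that trajectory avoids the vertices --- so that $u_k$, microlocally near the trajectory, becomes a Helmholtz solution near the core of a flat strip whose lateral edges are the neighbouring trajectories that run into a vertex. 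An explicit analysis on this model (a positive-commutator, or Fourier-series, argument in the variable transverse to the core) then shows that a solution microlocalized onto the core direction cannot be transversally localized: its mass must spread across the strip, up to those vertex-hitting trajectories. Hence $\mu$ charges phase points arbitrarily close to a vertex, i.e.\ points of $\pi^{-1}(U)$, contradicting $\supp\mu\cap\pi^{-1}(U)=\emptyset$. So no ergodic component of $\mu$ can avoid $\pi^{-1}(U)$; but all of $\mu$ does, forcing $\mu=0$ and contradicting $\mu(S^*(B\setminus V))=1$.

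The main obstacle is the transverse estimate. Even for a single bouncing-ball strip, showing that true eigenfunctions --- unlike quasimodes --- admit no transverse localization is the essential point; carrying it out uniformly across the polygon-dependent family of trapped configurations, for trapped sets that need not be periodic, and controlling the behaviour as a channel widens until its bounding trajectories strike the vertices, is where the real difficulty lies. By comparison, the low-frequency reduction, the extraction of $\mu$, its flow-invariance, and its avoidance of a neighbourhood of $V$ are routine.
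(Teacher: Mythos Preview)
Your overall architecture matches the paper's: argue by contradiction, extract a semiclassical defect measure $\mu$ that is a flow-invariant probability measure supported away from $\pi^{-1}(U)$, and then show that no such $\mu$ can arise from genuine eigenfunctions by a transverse (Burq--Zworski type) estimate on each trapped channel. The low-frequency reduction and the extraction and invariance of $\mu$ are, as you say, routine.

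The genuine gap is dynamical. You write that the transverse analysis must be carried out ``for trapped sets that need not be periodic,'' and you treat this as the main obstacle. But in fact every billiard trajectory in a polygon (or geodesic on the doubled surface) that stays a positive distance from the vertex set \emph{is} periodic, and the periodic trajectories that avoid a fixed neighbourhood $U$ of $V$ fill only finitely many maximal flat cylinders. This is a nontrivial theorem of billiard dynamics (Galperin--Kr\"uger--Troubetzkoy, with the finiteness due to Delman--Galperin); the paper states it as condition $(CC)$ and devotes an entire section to proving it via Furstenberg's uniform recurrence theorem. Without this input your reduction to a ``channel'' is not justified: an ergodic component of $\mu$ could a priori be carried by a non-periodic minimal set, and for such a set there is no flat strip model on which to run the transverse Fourier analysis. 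Once $(CC)$ is in hand, by contrast, the hard part evaporates: each cylinder is literally a flat $\SP^1_L\times(0,a)$, and the transverse estimate is exactly the elementary one-dimensional proposition of Burq--Zworski (\cite{BZ3}, p.~46), applied after a constant-coefficient microlocal cutoff in direction. So what you flag as the essential analytic difficulty is in reality resolved by a dynamical lemma you have not supplied.

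Two smaller remarks. First, the paper avoids boundary microlocal analysis altogether by doubling $B$ across its edges to obtain a closed Euclidean surface with conical singularities; your local ``unfold by successive reflections'' is the same idea, but doing it globally once is cleaner and makes the flow-invariance of $\mu$ immediate away from the cone points. Second, your appeal to ergodic decomposition is unnecessary once you know the support of $\mu$ lies in finitely many cylinders: one simply shows $\mu(\cyl_i)=0$ for each $i$ directly.
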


We will generalize this result to any Euclidean surface with conical singularities 
(\ESCS) $X$ with $U$ being any neighbourhood of the set of conical points and the 
$u_k$ being the eigenfunctions of the (Friedrichs)-Laplace operator on $X$
(see Section \ref{two} for more precise definitions).

Actually our main concern will be to derive a  sufficient geometric condition for $U\subset X$ 
that ensures that $U$ is a control region for the Laplace operator. 
It is well-known that such a condition is obtained by the so-called geometric control 
(see \cite{BZ2} for instance) and we will be interested in regions $U$ for which geometric control fails. 
One major obstruction to geometric control is the existence of periodic orbits that 
do not intersect $U.$ Since, on an \ESCS, the non-singular periodic orbits form 
Euclidean cylinders immersed in $X$, we introduce the 
following geometric condition:

\begin{defn}
A region $U\subset X$ is said to satisfy  condition $(CC)$ (the `cylinder condition')  if the following 
two properties hold.
\begin{enumerate}
\item Any orbit that avoids $U$ is non-singular and periodic.
\item  There exists a finite collection of cylinders $(\cyl_i)_{i\leq N}$ such that 
any orbit that avoids $U$ belongs to some $\cyl_i$.
\end{enumerate}
\end{defn}

Here, by a \emph{cylinder} we mean an isometric immersion of $ \SP^1_l \times I$ into $X_0$, 
where $I \subset \RR$ is an interval and $\SP^1_l$ is the circle of length $l$ (see Lemma 
\ref{defcyl} below). 

The main theorem of this paper is then the following.

\begin{thm}
\label{ThmMain}
Let $X$ be an orientable \ESCS \ and $U$ a domain satisfying $(CC)$. Then 
there exists a positive constant $c=c(U)$ such that any normalized eigenfunction $u_k$ of the 
Euclidean Laplace operator on $X$ satisfies 
\begin{equation}
\label{est}
\int_U |u_k|^2 \geq c
\end{equation}
\end{thm}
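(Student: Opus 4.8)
The plan is to argue by contradiction using the standard defect/semiclassical measure machinery adapted to Euclidean surfaces with conical singularities. Suppose the conclusion fails: there is a sequence of $L^2$-normalized eigenfunctions $u_k$, with eigenvalues $\lambda_k^2$, such that $\int_U |u_k|^2 \to 0$. After passing to a subsequence, I would distinguish the two cases $\lambda_k$ bounded and $\lambda_k \to \infty$. The bounded case is handled by a compactness/elliptic-regularity argument on the (finitely many) eigenspaces involved, using unique continuation away from the conical points: an eigenfunction vanishing on the open set $U$ must vanish identically, contradicting normalization. So the heart of the matter is $\lambda_k \to \infty$, where I would set $h_k = \lambda_k^{-1}$ and extract a semiclassical defect measure $\mu$ on the cotangent bundle $T^*X_0$ of the flat part $X_0 = X \setminus \{\text{conical points}\}$, associated to the sequence $(u_k, h_k)$. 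Standard arguments show $\mu$ is a probability measure (no mass escapes to infinity in $\xi$ since the $u_k$ are eigenfunctions), $\mu$ is supported on the unit cosphere bundle, and $\mu$ is invariant under the geodesic (billiard) flow on $X_0$. The hypothesis $\int_U |u_k|^2 \to 0$ gives $\mu(T^*U) = 0$, hence by flow-invariance $\mu$ is supported on the set of points whose entire orbit avoids $U$.

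Next I would invoke condition $(CC)$: every orbit avoiding $U$ is non-singular and periodic, and lies in one of finitely many immersed cylinders $\cyl_i \cong \SP^1_{l_i} \times I_i$. Therefore $\supp \mu \subset \bigcup_{i=1}^N \cyl_i$ (lifted to the cosphere bundle, taking the two orientations of the closed geodesics), and $\mu$ decomposes as a finite sum of flow-invariant measures $\mu_i$ carried by the cylinders. On each cylinder the invariant measures are exactly the ones of the form (arclength along $\SP^1_{l_i}$) $\otimes$ ($\nu_i$ on $I_i$) for some finite measure $\nu_i$ on the interval $I_i$ — this is where the flat cylinder structure from Lemma \ref{defcyl} is used. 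The crux is then to show such a measure cannot be a defect measure of Dirichlet/Neumann eigenfunctions on $X$: the key point is that a nonzero piece of $\mu$ concentrated on a cylinder forces the eigenfunctions to look, microlocally and to leading order, like the corresponding invariant states on $\SP^1_{l_i} \times I_i$, i.e.\ like $e^{i\lambda_k \theta} \phi_k(t)$ with $\phi_k$ increasingly concentrated on $\supp \nu_i$. I would then run the second-microlocalization / normal-form argument along the cylinder (exactly in the spirit of Burq--Zworski \cite{BZ2, BZ3} and the third author \cite{M1}): the transverse variable satisfies a $1$-dimensional semiclassical ODE, and either the transverse measure $\nu_i$ spreads out to the boundary of the maximal cylinder — where the cylinder degenerates into a singular orbit hitting a cone point or a region meeting $U$, contradicting $(CC)$ or $\mu(T^*U)=0$ — or it concentrates, which is excluded by a propagation/non-concentration estimate for the transverse operator. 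Summing over $i$ forces $\mu = 0$, contradicting that $\mu$ is a probability measure.

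The main obstacle I anticipate is the last step: controlling the behavior of the eigenfunctions near the \emph{boundary of the maximal cylinders}, i.e.\ understanding what happens to the transverse measure $\nu_i$ at the ends of $I_i$. A maximal flat cylinder in an \ESCS\ terminates either at a conical point (where the immersion breaks down) or where the closed geodesics it carries cease to exist; near such ends one does not have a clean product structure, and the usual interior propagation estimates must be supplemented by a careful analysis of the geometry of $X_0$ near these degenerations. Making precise the dichotomy ``$\nu_i$ either reaches the bad end (forbidden) or concentrates transversally (forbidden)'' — and ruling out transverse concentration with a quantitative non-concentration estimate uniform in $k$ — is the technical core, and I would expect it to occupy the bulk of the paper, relying on the explicit flat geometry of the cylinders together with elliptic estimates near the cone points (where $\lambda_k$-scale elliptic regularity, rather than propagation, does the work).
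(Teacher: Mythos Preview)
Your overall strategy --- contradiction, extraction of a semiclassical measure $\mu$, flow invariance plus $\mu(\pi^{-1}U)=0$, reduction via $(CC)$ to finitely many cylinders, then ruling out mass on each cylinder by a Burq--Zworski/\cite{M1}-type argument --- is exactly the paper's. Where you diverge is in the endgame on a single cylinder and in what you flag as the technical core.

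The paper's argument on a cylinder $\cyl \simeq \SP^1_L \times (0,a)$ is more concrete and bypasses the obstacle you anticipate. One pulls $u_n$ back to $\cyl$, multiplies by a cutoff $\chi(y)$ equal to $1$ on the set of $y$ whose periodic geodesic avoids $U$ and supported compactly in $(0,a)$, and views $v_n=\chi u_n$ as a function on a torus. A constant-coefficient pseudodifferential cutoff $\Phi$ (equal to $1$ near the $dx$-direction, vanishing near the finitely many other directions in $\supp\nu$) is applied, and one feeds $w_n=\Phi v_n$ into the Burq--Zworski control estimate
\[
\|w\|_{L^2(R)}^2 \ \leq\ C\big(\|f\|_{L^2(R)}^2 + \|g\|_{L^2(R)}^2 + \|w\|_{L^2(\omega)}^2\big),\qquad (\Delta-\lambda^2)w=f+\partial_x g,
\]
with $\omega\subset\{\chi=0\}$. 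The forcing terms $f_n,g_n$ come from $[\Delta,\chi]$ and are supported on $\supp\nabla\chi$, i.e.\ on transverse parameters $y$ whose geodesics \emph{do} meet $U$; there the semiclassical measure already vanishes, so $\|f_n\|,\|g_n\|\to 0$, while $\|w_n\|_{L^2(\omega)}=0$ by the choice of $\omega$. Hence $w_n\to 0$ and $\mu$ has no mass on $\cyl$.

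The point is that the boundary of the maximal cylinder never enters: the cutoff $\chi$ removes it, and the price paid --- the commutator --- lands exactly where $\mu$ is already known to vanish. No second microlocalization, no analysis of the transverse measure $\nu_i$ at the ends of $I_i$, and no ``spread to the boundary versus concentrate'' dichotomy is needed; the one-dimensional control proposition does the whole job. Your structural observation $\mu_i=\text{(arclength)}\otimes\nu_i$ is correct but unused. Your route would likely also work, but it is heavier than necessary, and the difficulty you single out is not where the actual work lies. (Your separate treatment of bounded eigenvalues via unique continuation is a clean addition the paper leaves implicit.)
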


The first theorem is derived from this one by letting $X$ be the double of the polygon $B$ and by taking 
$U$ the $\eps$ neighbourhood of the conical points of $X$, corresponding to the vertices of $B$; see Figure \ref{fig:3}. 
The fact that, for billiards,  $U$ satisfies $(CC)$ is 
the principal result of \cite{DG} (see Section \ref{four} below).

\begin{figure}
\includegraphics[width=4.0in]{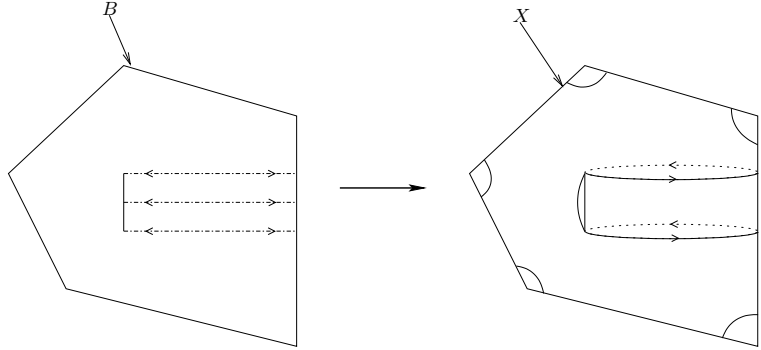}
\caption{Doubling a billiard $B$, here with a slit, to form an ESCS  $X$. Each vertex with angle $\alpha$ gives rise to a conic point with angle $2\alpha$, and the endpoints of the slit become conic points with angle $4\pi$.}
\label{fig:3}
\end{figure}

\begin{rem} Using Theorem \ref{ThmMain}, Theorem \ref{ThmPoly} can be sharpened 
by removing from $V$ those vertices with angle of the form $\pi/n$ for some integer $n,$  
as a reflection principle argument takes care of such vertices. 
Concretely, the geodesic flow may be non-ambiguously prolongated at such points. 
In particular, a cylinder hitting such a vertex on its boundary may be prolongated. 
\end{rem}

\begin{rem}
Let us mention other settings where our theorems apply. In the first theorem, 
the polygon may have polygonal holes and/or slits in it. In this case, $V$ should 
include the vertices of the holes and the ends of the slits. It also applies 
to translation surfaces, and thus also to tori with slits.  
%See Figure \ref{fig:1} for examples of applicable billiards.
\end{rem}

\begin{rem}
We will prove in Section \ref{three} that any neighbourhood of the conical 
points satisfies condition $(CC)$ on any 
\ESCS \  so that we could have stated the theorem without refering to this condition. 
We have stated it thus in order to emphasize the fact that such a control result usually 
requires two distinct steps. The first step
is to find some geometric or dynamical condition that implies 
control, and the second step is to find settings where this condition holds. 
These two steps proceed from essentially different methods: analytic in the 
case of the first step, and geometric/dynamical in the case of the second. 
\end{rem}

%\begin{figure}
%\includegraphics[width=6.0in]{fig1.ps}
%\caption{Examples of polygonal billiards for which Theorem \ref{ThmMain} is applicable with Dirichlet or Neumann boudary conditions on the solid lines and periodic boundary conditions on the dashed lines.}
%\label{fig:1}
%\end{figure}

The organization of the paper reflects the two steps of the proof described in the 
preceding remark. We first recall in Section~\ref{two} some basic facts about 
\ESCS s, semiclassical measures and the doubling procedure that 
allows one to treat polygonal billiards. 

In Section \ref{three}, we will prove 
that $(CC)$ implies control. To do this, we shall argue by contradiction 
in the following way. Assume that there is no $c$ such that \eqref{est} holds. 
Then there is a sequence $(u_n)$ of normalized eigenfunctions with eigenvalues $\lambda_n^2 \to \infty$, 
whose mass in $U$ tends to zero as $n \to \infty$. Associated to such a sequence is (at least one) semiclassical 
measure $\mu$ which is necessarily supported away from the inverse image $\pi^{-1}(U)$ --- 
see Lemma~\ref{supp}. It is a standard property of such semiclassical measures that the support of $\mu$ is invariant 
under the billiard flow. 
We shall show that the support property of $\mu$ just mentioned and the geometric condition $(CC)$ 
imply that $\mu$ would have to be supported on 
the cylinders $\cyl_i$. So it suffices, for a contradiction, to show that its mass on each such cylinder is zero. 
To do this we use the argument of \cite{M1} (which in turn relies on \cite{BZ3}) slightly modified so as 
to avoid a technical assumption made there.  

Finally, in Section~\ref{four}, we demonstrate $(CC)$ for any neighbourhood of the set $P$ of  conic points of an \ESCS \ $X$. 

%%%%%%%%%%%%%%%%%%%%%%%%%%%%%%%%%%

\section{\ESCS s, polygons and semiclassical measures}
\label{two}
%%%%%% Basic definitions of E.s.c.s %%%%%%%%%%%%
A Euclidean surface with conical singularities (ESCS) is a surface $X$ 
equipped with a metric $g$ such that $X$ may be written $X_0 \cup P$ where 
the metric $g$ is Euclidean on $X_0,$ and $P$ consists of a finite 
number of points $p_i$, such that each $p_i$ has a neighbourhood  isometric to a Euclidean cone whose tip 
corresponds to $p_i$. 

A reason for studying the Laplace operator on a \ESCS \ is its relation with polygonal billiards. 
Indeed, starting from a polygon $B$, possibly with polygonal holes and/or slits, the following doubling 
procedure gives a \ESCS \ $X.$ 
Take two copies $B$ and $\sigma B$ of the polygon where $\sigma$ is a reflection of the plane. The double 
$X$ is obtained by considering the formal union $P\cup \sigma P$ where two corresponding sides are 
identified pointwise. The reflection $\sigma$ then gives an involution of $X$ that commutes 
with the Laplace operator. The latter thus decomposes into odd and even 
functions and the reduced operators are then equivalent to 
the Laplace operator in $P$ with Dirichlet and Neumann boundary condition respectively. 
In particular, for any $u_n$ eigenfunction of the Neumann, resp. 
Dirichlet Laplace operator in $P$, we can construct an 
eigenfunction of the Laplace operator in $X$ by taking $u$ in $P$ and 
$u \circ \sigma$, resp. $-u \circ \sigma$ in $\sigma P.$

On such a surface, we shall consider the geodesic flow induced by the 
Euclidean metric on $X_0.$ We will not consider here the geodesics that end in 
a conical point since the condition $(CC)$ only considers  
non-singular geodesics. The following lemma  
shows that a non-singular periodic geodesic on a \ESCS \ is always part of a family.   

\begin{lem}\label{defcyl}
Let $X$ be an orientable \ESCS. 
Let $g\,: \R \rightarrow X$ be a non-singular T-periodic geodesic, then there exists 
$\delta>0$ such that $g$ extends to $h$ from $\R \times (-\delta,\delta) $ into $X_0$ 
such that 
\begin{enumerate}
\item $h(t,0)=g(t),$   
\item $h$ is a local isometry from $\R \times (-\delta,\delta)$ equipped with the flat metric into $X_0$,
\item $h$ is $T$-periodic in $t$. 
\end{enumerate}
Thus $h$ may be viewed as defined on the cylinder 
$\cyl_{\delta,T}\,:=\,\SP^1_T\times (-\delta,\delta)$. 

\end{lem}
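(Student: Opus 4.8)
The plan is to propagate a geodesic ``sideways'' from the given periodic geodesic $g$ using the local Euclidean structure, and then check that the construction closes up after time $T$ by an isometry argument. First I would cover the image of $g$ by finitely many open sets $\{W_j\}_{j=1}^M$, each isometric to an open subset of $\br^2$; this is possible because the trace $g(\br)=g([0,T])$ is compact and contained in $X_0$ (it is a non-singular geodesic, so it stays away from $P$), hence a positive distance $\rho>0$ from $P$. On each $W_j$ the geodesic $g$ is, in the Euclidean coordinates, a straight line segment, and I can push it off in the unit normal direction: for $|s|$ small, the point at signed distance $s$ from $g(t)$ along the normal is well-defined and depends smoothly on $(t,s)$. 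The standard fact that the transition maps between overlapping Euclidean charts are (restrictions of) rigid motions of $\br^2$ guarantees that these locally-defined normal translates agree on overlaps, so, choosing $\delta\in(0,\rho)$ small enough that the whole strip stays within the charts, one obtains a well-defined smooth map $h\colon \br\times(-\delta,\delta)\to X_0$ with $h(t,0)=g(t)$. Since in each chart $h$ is literally $(t,s)\mapsto g(t)+s\,n(t)$ with $n$ the (constant, in that chart) unit normal to the line, $h$ is a local isometry from the flat strip; this gives properties (1) and (2).

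For property (3), the $T$-periodicity in $t$, the point is that the germ of $h$ near $t=0$ and near $t=T$ are two local isometries of a neighbourhood of $g(0)=g(T)$ into $X_0$ that agree to first order along $g$ at that point: both send the horizontal axis to $g$ with the same velocity $g'(0)=g'(T)$ (here I use that $g$ is a geodesic, smooth through the period point — this is exactly where orientability enters, to pin down the normal direction consistently and rule out an orientation-reversing mismatch). A local isometry of a flat disc into $X_0$ is determined by a point and the $1$-jet of the map there (equivalently, by the image of one frame), so the two germs coincide on a neighbourhood of $\{0\}\times(-\delta,\delta)$; by a connectedness/continuation argument along $t\in[0,T]$ this forces $h(t+T,s)=h(t,s)$ for all $(t,s)$, after possibly shrinking $\delta$. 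Consequently $h$ descends to the quotient $\SP^1_T\times(-\delta,\delta)=\cyl_{\delta,T}$.

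The main obstacle I anticipate is property (3): properties (1) and (2) are a routine ``tubular neighbourhood in a flat chart'' construction, but closing the strip up requires genuinely using that $X$ is orientable and that periodic geodesics on an \ESCS\ have a well-defined holonomy which, for a \emph{non-singular closed} geodesic, is trivial (a pure translation rather than a rotation), so that the normal direction returns to itself. I would phrase this cleanly via the developing map / holonomy of the flat structure: the holonomy around $g$ lies in the Euclidean group $E(2)$, its rotational part fixes the direction $g'(0)$, hence is the identity (in the orientable case), and its translational part is $T\,g'(0)$; triviality of the rotational part is exactly what makes the normal translates match up after time $T$. One should also note the harmless ambiguity that $\delta$ may need to be decreased finitely many times (to fit inside charts, and to make the overlap-of-germs argument valid uniformly in $t\in[0,T]$), which is fine since only existence of \emph{some} $\delta>0$ is claimed.
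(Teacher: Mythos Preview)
Your proof is correct and follows essentially the same approach as the paper: construct $h$ by patching local Euclidean charts along the compact trace $g([0,T])$, then invoke orientability to exclude the reflected alternative when checking $T$-periodicity. The paper phrases the closing-up step via a distance argument ($h(t,s)$ lies at distance $|s|$ from $g$, realized at $g(t)$, so there are only two candidates for $h(t+T,s)$ and orientability selects the right one) rather than your 1-jet/holonomy formulation, but the content is the same.
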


\begin{proof}
Let $T$ be the smallest period of $g$.  For any $t  \leq T$ there exists $\delta_t$ such that the square 
$(-\delta_t,\delta_t)^2$ is isometric to a neighbourhood of $g(t).$ Moreover, this isometry 
$h_t$ may be chosen so that the horizontal segment $(-h_t,h_t) \times \{ 0\}$ 
projects onto $g(t-h_t,t+h_t)\,:\,h_t(t_1,0)=\gamma(t+t_1).$  Using compactness, 
$\delta\,=\,\inf \{ \delta_t,~t\in [0,T] \}$ exists and is positive. 
Gluing the $h_t$ by continuity defines a local 
isometry $h : \R\times (-\delta,\delta)$ into $X_0.$ 
By construction, for any $s$, $h(t,s)$ is at distance $|s|$ of the geodesic $g$ and 
this distance is realized by $g(t).$ Thus, there are 
only two possible choices for $h(t+T,s).$ Since $X$ is orientable, 
necessarily $h(t+T,s)=h(t,s).$ 
\end{proof}

\begin{figure}
\includegraphics{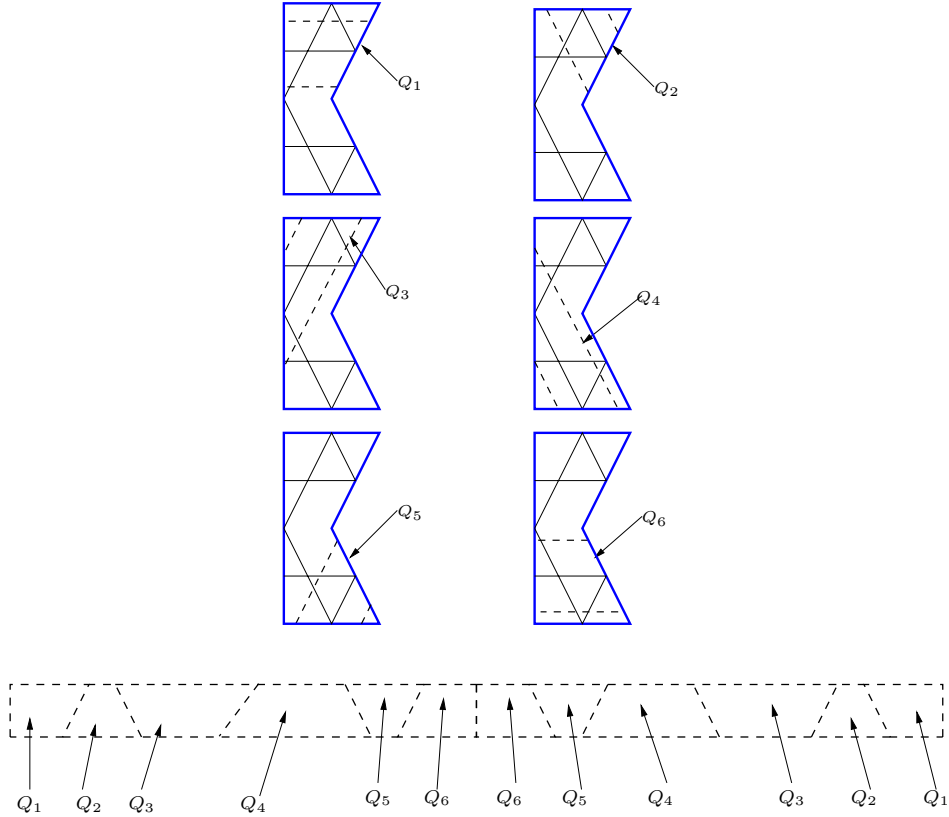}
\caption{Construction of a cylinder from its components $Q_j$ along a periodic trajectory.}
\label{fig:2}
\end{figure}

Let $\eta \in (-\delta, \delta)$. As $\eta \uparrow \delta$, the periodic geodesics $h(t,\eta)$ converge 
to a possibly singular periodic geodesic (and similarly for $\eta \downarrow -\delta$). 
The cylinder $\cyl_{\delta,T}$ will be called maximal if both these geodesics are singular. 
In the geometric condition $(CC)$, we may assume that the cylinders are maximal.
  
We now define the Euclidean Laplace operator on a \ESCS.  First note that the Euclidean metric 
on $X$ provides us with a well-defined $L^2$ norm and that smooth functions compactly 
supported in $X_0$ are dense in $L^2(X)$. For any such function, we can also 
define the quadratic form $q(u)= \int_{X} |\nabla u|^2 dx$ in which $\nabla$ is taken with respect 
to the Euclidean metric and $dx$ is the Euclidean area element. The Laplace operator is 
the self-adjoint operator associated with the closure of this quadratic form. It is also the 
Friedrichs extension of the usual Euclidean Laplace operator defined on ${\mathcal C}^\infty_0(X_0).$ 
It is standard that this operator has compact resolvent so that its spectrum is purely discrete
and we may consider its eigenvalues and eigenfunctions.
 
%%%%%%%%%%%%%% Semiclassical measures on X_0 %%%%%%%%%%%%%%

Let $u_n$ be a sequence of eigenfunctions on $X$ associated with a sequence of eigenvalues going to infinity. 
We want to associate to this sequence a so-called {\em semiclassical measure}. Since we do not want to 
look precisely at what is happening at the conical point, our semiclassical measure $\mu$ will 
be a positive distribution acting on ${\mathcal C}_0^\infty(S^*X_0),$ 
where $S^*X_0$ denotes the unit cotangent bundle over $X_0.$ 
Our semiclassical measure is then given by the usual recipe. In particular, for any $a\in {\mathcal C}_0^\infty(S^*X_0)$ 
and any zeroth-order pseudodifferential operator $A$ on $X$ with principal symbol $a$ we have 
$$
\lim_{n \to \infty} \langle Au_n,u_n\rangle \,=\, \int_{S^*X_0} ad\mu.
$$

\begin{rem}
It is considerably simpler to define a pseudodifferential operator 
on $X_0$ than on $X$. In particular we may use local isometries with the Euclidean plane. 
\end{rem}
  
\begin{rem}\label{ProbMeas}
It should be noted that, in contrast with the usual semiclassical measure, with this definition, 
a semiclassical measure need not be a probability measure. In order to be a probability 
measure one has to prove that, loosely speaking, no mass accumulates at the conical points. 
\end{rem}

The invariance property of this measure by the geodesic flow also has to be taken carefully. 
The infinitesimal version of this invariance is true using the standard commutator argument and 
Egorov's theorem since this computation takes place integrally over $X_0.$ One can then integrate 
this property along any 
geodesic until it reaches a vertex.

%%%%%%%%%%%%%%%%%%%%%%%%%%%%%%%%%%%

\section{Proof of the main theorem}
\label{three}

Let $U$ be a domain of $X$ satisfying condition $(CC)$.  Necessarily we have $P\subset U.$ 
We will denote by $U_0=U \backslash P$.  
Let $u_n$ be a sequence of normalized eigenfunctions such that $\int_U |u_n|^2\rightarrow 0.$ 
Let $\mu$ be any semiclassical measure associated to $(u_n)$. Then 
we have

\begin{lem}\label{supp}
(i) The support of $\mu$ is disjoint from $\pi^{-1}(U_0)$. 

(ii) $\mu$ is a probability measure that is invariant under the geodesic flow. 
\end{lem}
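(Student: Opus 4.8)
The plan is to run the usual semiclassical‑measure machinery, keeping everything localised inside $X_0$, where the metric is flat, so that the cone points cause no difficulty for the pseudodifferential calculus (cf.\ the remarks in Section~\ref{two}); the lemma is then fairly routine, the one substantive point being the absence of mass at the cone points, which the hypothesis hands us for free. Write the eigenvalue of $u_n$ as $\lambda_n^2$ and set $h_n=\lambda_n^{-1}$, so that $(h_n^2\Delta-1)u_n=0$ and $h_n^2\Delta$ has semiclassical principal symbol $|\xi|_g^2$ over $S^*X_0$. Along the subsequence defining $\mu$ we may, after passing to a further subsequence, also assume that the probability measures $|u_n|^2\,dx$ on the compact surface $X$ converge weak-$*$ to a probability measure $\nu$ on $X$ (that $\nu(X)=1$ is immediate, testing against the constant $1\in{\mathcal C}(X)$); and $\mu\geq 0$ follows as usual from the sharp G{\aa}rding inequality. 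The basic link between the two limits is that for $\psi\in{\mathcal C}_0^\infty(X_0)$ the symbol $\psi\circ\pi$ lies in ${\mathcal C}_0^\infty(S^*X_0)$ and $\Op_{h_n}(\psi\circ\pi)$ equals multiplication by $\psi$ up to $O_{L^2\to L^2}(h_n)$, so that $\int(\psi\circ\pi)\,d\mu=\lim_n\int_X\psi\,|u_n|^2=\int_X\psi\,d\nu$; that is, $\pi_*\mu=\nu|_{X_0}$.

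For part (i), let $a\in{\mathcal C}_0^\infty(S^*X_0)$ be supported in $\pi^{-1}(U_0)$, so that $\pi(\supp a)$ is a compact subset of $U_0\subset U$. Choose $\chi\in{\mathcal C}_0^\infty(U_0)$ with $0\leq\chi\leq 1$ and $\chi\equiv 1$ near $\pi(\supp a)$. By pseudolocality of the calculus, $\Op_{h_n}(a)=\chi\,\Op_{h_n}(a)\,\chi+O_{L^2\to L^2}(h_n)$, whence
\[
\langle\Op_{h_n}(a)u_n,u_n\rangle=\langle\Op_{h_n}(a)(\chi u_n),\chi u_n\rangle+O(h_n)\leq C\|\chi u_n\|_{L^2}^2+O(h_n),
\]
which tends to $0$ because $\|\chi u_n\|_{L^2}^2\leq\int_U|u_n|^2\to 0$. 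Hence $\int a\,d\mu=0$ for all such $a$, i.e.\ $\mu(\pi^{-1}(U_0))=0$; since $\pi^{-1}(U_0)$ is open and $\mu$ is a positive measure, $\supp\mu$ is disjoint from $\pi^{-1}(U_0)$. (Equivalently, $\mu(\pi^{-1}(U_0))=\pi_*\mu(U_0)\leq\nu(U)$, which vanishes by the computation in (ii).)

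For part (ii), invariance under the geodesic flow is the standard commutator argument: since $u_n$ is an eigenfunction, $\langle[h_n^2\Delta,\Op_{h_n}(a)]u_n,u_n\rangle=0$, while $\tfrac{i}{h_n}[h_n^2\Delta,\Op_{h_n}(a)]=\Op_{h_n}(\{|\xi|_g^2,a\})+O_{L^2\to L^2}(h_n)$, so $\int\{|\xi|_g^2,a\}\,d\mu=0$ for every $a\in{\mathcal C}_0^\infty(S^*X_0)$ --- precisely the infinitesimal invariance of $\mu$ under the Hamiltonian flow of $|\xi|_g^2$ (the geodesic flow), which one integrates along geodesic arcs up to the vertices as explained in Section~\ref{two}. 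It remains to check that $\mu$ is a probability measure. Since $U$ is open, the portmanteau theorem and the hypothesis give $\nu(U)\leq\liminf_n\int_U|u_n|^2=0$; as $P\subset U$, this forces $\nu(P)=0$, and therefore $\mu(S^*X_0)=\pi_*\mu(X_0)=\nu(X_0)=\nu(X)=1$. The only point that is genuinely delicate in general --- ruling out escape of mass to the conical points, see Remark~\ref{ProbMeas} --- is here settled immediately by the assumption that the mass of $u_n$ in the neighbourhood $U\supset P$ of the cone points tends to zero; this is exactly the feature that makes condition $(CC)$ (which forces $P\subset U$) the natural hypothesis.
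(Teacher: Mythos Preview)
Your proof is correct and follows essentially the same line as the paper's: part (i) boils down to the observation that the $\mu$-mass over $U_0$ is controlled by $\int_U|u_n|^2\to 0$, and part (ii) combines the standard commutator argument for invariance with the fact that the hypothesis (together with $P\subset U$) rules out mass escaping to the conical points. Your introduction of the auxiliary weak-$*$ limit $\nu$ of $|u_n|^2\,dx$ and the identity $\pi_*\mu=\nu|_{X_0}$ is a clean device to make the ``$\mu$ is a probability measure'' step in (ii) precise, where the paper simply invokes Remark~\ref{ProbMeas}.
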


\begin{proof}
(i) Suppose that there is a point $q \in \supp \mu$ with $\pi(q) \in U_0$. Choose a nonnegative function $\phi \in C^\infty(X_0)$ supported in $U_0$, with $\phi \equiv 1$ in a small neighbourhood $G$ of $\pi(q)$. Since $\phi \geq 0$ and $\mu$ is a positive measure, we have $\langle \mu, \phi \rangle \geq 0$. If $\langle \mu, \phi \rangle = 0$ then $\langle \mu, \chi \rangle = 0$ for every $\chi \in C_0^\infty(S^*X_0)$ supported in $\pi^{-1}(G)$, since we have $\chi = \chi \phi$, and by the positivity of $\mu$ and $\phi$, $|\langle \mu, \chi \phi \rangle|$ is bounded by $\langle \mu, \phi \rangle \| \chi \|_\infty$. But this would mean that $\pi^{-1}(G)$ is disjoint from the support of $\mu$, which is not the case. Thus we conclude that $\langle \mu, \phi \rangle > 0$. This means that 
$$
\lim_{n \to \infty} \int_B |u_n|^2 \phi  > 0,
$$
contradicting our assumption about the sequence $(u_n)$. This proves (i). 

(ii) Statement (i) tells us precisely that no mass accumulates at the conical points and thus $\mu$ is 
a probability measure (see Remark \ref{ProbMeas} above). The invariance holds since $\mu$ is 
a semiclassical measure. 
\end{proof}

Let $\mu$ be as above, and let $(z, \zeta) \in T^* X_0$ be in the support of $\mu.$ 
According to the preceding lemma  and the invariance property of $\mu$, 
condition $(CC)$ implies that $z$ belongs to a cylinder periodic in the direction $\zeta.$ 

The support of $\mu$ is thus included in the union of the maximal cylinders 
$\cyl_i$ defined in condition $(CC).$

Let $\cyl$ be such a cylinder. By definition, there is a local isometry between 
$\SP^1_L\times (0,a)$ and $\cyl.$ Using it, we can pull-back the eigenfunction $u_n$ to 
$\cyl$.  
We now apply the argument of \cite{M1} to this function $u_n$ on $\cyl$. 
Let us use Cartesian coordinates $(x, y)$ on $\cyl$, where $x \in [0, L]$, $y \in [0, a]$ 
with $x = 0$ and $x=L$ identified. 
Thus $\{ y = 0 \}$ and $\{  y = a \}$ are the two long sides of the cylinder, 
and the variable $y$ parametrizes  periodic 
geodesics. Choose a cutoff function $\chi \in C_c^\infty[0, a]$ such that $\chi = 1$ on an open set containing 
all $y$ parametrizing all paths disjoint from $U_\epsilon$ 
(as opposed to $U_{\epsilon/2}$). Then $\chi u_n$ vanishes near the long 
sides of $\cyl$, and thus may be regarded as a function on a torus $T$. 
So we now have a sequence $v_n = \chi u_n$ on $T$. 
Consider any semiclassical measure $\nu$ associated with the sequence $(v_n)$ on $T$. 
By compactness, the $v_n$ are bounded in $L^2$, 
so there exists at least one semiclassical measure associated with $(v_n)$ (on the torus). 
This could be the zero measure; this would be the case if $\| v_n \|_{L^2} \to 0$, for example. 
Since $\mu$ is supported on a finite number of cylinders, there are only a finite number of directions in the 
support of $\nu$. So we can find a constant-coefficient pseudodifferential operator $\Phi$ on $T$ that is 
microlocally $1$ in a neighbourhood of directions 
parallel to $dx$, i.e. in the direction of the unwrapped periodic paths, 
but vanishes microlocally in a neighbourhood of 
 every other direction in the support of $\nu$. 
 (See \cite{M1} for a discussion of constant-coefficient pseudodifferential operators on a torus.) 
 
Consider the sequence of functions $(\Phi v_n)$ on $T$. The semiclassical measures 
$\nu'$ associated to this sequence are related 
to those for the sequence $(v_n)$ by $\nu' = \sigma(\Phi) \nu$. 
Thus, the support of $\nu'$ is restricted to directions parallel 
to $dx $ and to geodesics parametrized by $y$ such that  $\chi(y) = 1$ 
(because of lemma \ref{supp} and the way we chose $\chi$). 
 
 Now we apply the proposition on p46 of \cite{BZ3} which says:

\begin{prop} Let $\Delta = -(\partial_x^2 + \partial_y^2)$ be the Laplacian 
on a rectangle $R = [0,l]_x \times [0,a]_y$. 
 For any open $\omega \subset R^2$ of the form $[0,l]_x \times \omega_y$, 
there is $C$ such that, for any solution of 
 $$
 (\Delta - \lambda^2) w = f + \partial_x g 
 $$
 on $R$, satisfying periodic boundary conditions, we have
 $$
 \| w \|_{L^2(R)}^2 \leq C \Big( \| f \|_{L^2(R)}^2 +  \| g \|_{L^2(R)}^2 +  \| w \|_{L^2(\omega)}^2 \Big).
 $$\end{prop}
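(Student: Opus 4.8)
The estimate is a quantitative unique-continuation / observability statement for the periodic-in-$x$ Laplacian on a rectangle, with a right-hand side in divergence form. The natural strategy is to expand $w$, $f$ and $g$ in the Fourier basis $e^{2\pi i n x/l}$ adapted to the periodic direction, reducing to a family of one-dimensional ODEs in $y$ indexed by $n \in \mathbb{Z}$, and then to sum the resulting one-dimensional estimates. First I would write $w(x,y) = \sum_n w_n(y) e^{2\pi i n x/l}$, and similarly for $f$ and $g$; the equation $(\Delta - \lambda^2)w = f + \partial_x g$ becomes, for each $n$,
\begin{equation*}
-w_n'' + \Big( \tfrac{4\pi^2 n^2}{l^2} - \lambda^2 \Big) w_n = f_n + \tfrac{2\pi i n}{l} g_n =: F_n
\end{equation*}
on $[0,a]$ (with whatever boundary conditions in $y$ are inherited from the problem on $R$), and by Parseval the desired inequality is equivalent to
\begin{equation*}
\sum_n \| w_n \|_{L^2(0,a)}^2 \leq C \sum_n \Big( \| f_n \|_{L^2}^2 + \| g_n \|_{L^2}^2 + \| w_n \|_{L^2(\omega_y)}^2 \Big),
\end{equation*}
so it suffices to prove the one-dimensional bound $\| w_n \|_{L^2(0,a)}^2 \leq C ( \| f_n \|^2 + \| g_n \|^2 + \| w_n \|_{L^2(\omega_y)}^2 )$ with $C$ \emph{uniform in $n$} (and in $\lambda$). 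The frequency in $x$ is thus absorbed into the spectral parameter $\mu_n := \lambda^2 - 4\pi^2 n^2/l^2 \in \mathbb{R}$ of the one-dimensional operator, and the point is to get a constant independent of $\mu_n \in \mathbb{R}$.

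**The one-dimensional estimate.** For fixed $n$, set $z = w_n$, solving $-z'' - \mu z = F$ on $[0,a]$ with $F = f_n + (2\pi i n/l) g_n$; note $\| F \|_{L^2}^2 \lesssim \| f_n \|^2 + (n^2/l^2)\| g_n \|^2$, which is \emph{not} uniformly controlled by $\| f_n \|^2 + \| g_n \|^2$ — so one cannot simply treat $F$ as an $L^2$ source. This is the heart of the matter: the $\partial_x g$ term must be handled by integration by parts so that only one derivative (equivalently one power of $n$) lands on anything, and that power is then paired with the $w_n''$ term rather than left bare. Concretely, I would test the equation against $z$ itself (or against a well-chosen multiplier), integrate by parts to move the $\partial_x$ off $g$, and exploit that $\| z' \|_{L^2}$ appears on the left with a good sign while $(n/l)\| g_n \|$ pairs against it via Cauchy–Schwarz — the factor $n/l$ is exactly what is needed to convert $\| g_n \|$ into something homogeneous with a derivative. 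The remaining task is then a purely one-dimensional observability inequality: for the operator $-d^2/dy^2 - \mu$ on $[0,a]$, control $\|z\|_{L^2(0,a)}$ by $\|z\|_{L^2(\omega_y)}$ plus the $L^2$ norm of the (genuine, derivative-free) source, uniformly in $\mu \in \mathbb{R}$.

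**The uniform one-dimensional observability.** To get uniformity in $\mu$ I would argue by contradiction and compactness in the frequency variable, separating two regimes. For $\mu$ in a bounded set, a standard normal-families / Carleman argument on the compact interval gives the bound with a locally uniform constant, and one checks the constant does not blow up as $\mu \to \mu_0$ by a contradiction argument (a normalized sequence of solutions would converge to a nonzero solution of the homogeneous equation vanishing on $\omega_y$, impossible by unique continuation for a second-order ODE). For $\mu \to +\infty$ one uses the explicit oscillatory behavior: solutions of $-z'' - \mu z = F$ are, up to the particular solution built from $F$ by the (uniformly bounded in suitable norms) resolvent, combinations of $\cos(\sqrt{\mu}\, y)$ and $\sin(\sqrt{\mu}\, y)$, whose $L^2$ mass on any fixed subinterval $\omega_y$ is comparable (with constants independent of $\mu$ for $\mu$ large) to their $L^2$ mass on all of $[0,a]$ — this is the elementary fact that a high-frequency trigonometric function cannot concentrate. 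For $\mu \to -\infty$ the equation is elliptic with large parameter and the estimate is immediate (in fact one gets $\|z\|^2 \lesssim \|F\|^2$ with no need for the observation region). Patching the three regimes yields a single constant $C$ valid for all $\mu \in \mathbb{R}$, hence for all $n$, and summing over $n$ via Parseval completes the proof.

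**Main obstacle.** The one genuinely delicate point is the $\partial_x g$ term: one must be careful that the integration by parts distributes the $x$-derivative so that, after passing to Fourier modes, the factor of $n$ it produces is matched by a derivative (or a factor $\sqrt{\mu_n}$) elsewhere in the estimate and never appears alone multiplying $\|g_n\|_{L^2}$; otherwise the constant would degenerate as $|n| \to \infty$. Organizing the energy identity so that this cancellation is manifest — rather than the cruder route of bounding $\|F_n\| \le \|f_n\| + (|n|/l)\|g_n\|$ — is what makes the divergence-form source admissible, and it is exactly the subtlety that forces the $g$-dependent term on the right-hand side of the Proposition to be $\|g\|_{L^2}^2$ and not, say, $\|g\|_{H^1}^2$.
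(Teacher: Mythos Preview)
A preliminary remark: the paper does not prove this proposition but quotes it from Burq--Zworski \cite{BZ3}, so there is no in-house argument to compare against; the method in \cite{BZ3} is precisely your Fourier expansion in the periodic direction followed by a uniform one-dimensional observability estimate. More importantly, the statement as printed carries a typo that matters for your write-up: the source term should be $f+\partial_y g$, not $f+\partial_x g$. The paper's own application a few lines later makes this clear, since the cutoff is $\chi=\chi(y)$ and $[\Delta,\chi]u_n=(\partial_y^2\chi)u_n-2\partial_y\big((\partial_y\chi)u_n\big)$; and in \cite{BZ3} the derivative in the source always lies in the same direction as the one in which the control strip $\omega$ varies, which here is $y$. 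In fact the printed statement with $\partial_x g$ is \emph{false}: fix any nonzero $\psi\in C^\infty(\SP^1_a)$ vanishing on $\omega_y$, take $\lambda=k_n:=2\pi n/l$, and set $w=e^{ik_nx}\psi(y)$, $f=0$, $g=(i/k_n)e^{ik_nx}\psi''(y)$. Then $(\Delta-\lambda^2)w=-e^{ik_nx}\psi''=\partial_xg$, while $\|w\|_{L^2(R)}^2=l\|\psi\|^2$ stays fixed, $\|w\|_{L^2(\omega)}=0$, and $\|g\|^2=l\|\psi''\|^2/k_n^2\to 0$ as $n\to\infty$.

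Your reduction to the one-dimensional problems $-w_n''-\mu_n w_n=F_n$ and your three-regime proof of the uniform 1D observability are correct and match the Burq--Zworski argument. You are also right that the divergence-form source is the only subtle point. But your proposed resolution for $\partial_x g$ --- ``integrate by parts to move the $\partial_x$ off $g$'' and then pair $(n/l)\|g_n\|$ against $\|z'\|$ --- cannot succeed: after the Fourier expansion in $x$, the operator $\partial_x$ has become multiplication by $ik_n$, so no integration by parts remains to be done, and $z'=\partial_y w_n$ carries no factor that can absorb $k_n$ (in the counterexample above $\mu_n=0$, so $z'$ is not large at all). For the \emph{correct} statement with $\partial_y g$ your idea works exactly as you describe: the 1D source is $f_n+g_n'$, testing against $\bar w_n$ turns $\langle g_n',w_n\rangle$ into $-\langle g_n,w_n'\rangle$, and now $\|g_n\|$ genuinely pairs with $\|w_n'\|$ on the left. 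So the substance of your ``main obstacle'' paragraph is right; it just applies to $\partial_y$, not $\partial_x$.
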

(This proposition is stated in \cite{BZ3} for Dirichlet boundary conditions on a rectangle, 
but applies equally well to periodic boundary conditions as noted in \cite{M1}.)
 We apply this with $w = w_n = \Phi v_n$, $f = f_n = \Phi((\partial_y^2 \chi) u_n)$, 
$g = g_n =  -2\Phi( (\partial_y \chi) u_n)$, and $\omega$ contained in the set 
$\{ \chi = 0 \}$. (Note that $\Phi$ commutes with $\Delta$ and $\partial_y$.)
 Since $f$ and $g$ are supported on the support of $\nabla \chi$, 
their support is disjoint from that of $\nu'$, so 
 $\| f_n \|_{L^2(R)}^2 +  \| g_n \|_{L^2(R)}^2 \to 0$ as $n \to \infty$. Also, by our choice of $\omega$, we have 
 $\| w_n \|_{L^2(\omega)}^2 = 0$. It follows that $\| w_n \|_{L^2(R)}^2 \to 0$. But this means that $\nu' = 0$. 
 This implies that $\nu$ has no mass along directions parallel to $dx$, which means that $\mu$ has no mass along 
 the cylinder $\cyl$. Since $\cyl$ is arbitrary, 
 and the number of such cylinders is finite, this means that $\mu$ has no mass, i.e. it is the zero measure.  
 This contradicts part (ii) of Lemma~\ref{supp}.
 We conclude that Theorem~\ref{ThmMain} holds.
 
 %%%%%%%%%%%%%%%%%%%%%%%%%%%%%%%%%%%

\section{Condition $(CC)$ for neighbourhoods of the conic set $P$}
\label{four}

In this section we study condition $(CC)$ in more detail. We wish in particular to address 
this condition for the region $U_\epsilon$ which is the $\eps$ neighbourhood of the set $P$ of conical points of an ESCS $X$. 

In this case we have the following proposition.

\begin{prop}\label{GKTDichotomy}
Let $X$ be an orientable \ESCS \ with singular set $P$. 

(i) For any geodesic $\gamma$, either $\gamma$ is periodic, or the closure of  $\gamma$ meets $P$. 

(ii) Let $U_\epsilon$ denote the $\eps$ neighbourhood of $P$. 
Then 
any periodic geodesic avoiding $U_\eps$ (which is periodic by part (i)) belongs to a maximal cylinder and the number of such maximal cylinders is finite.
\end{prop}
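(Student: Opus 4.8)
The plan is to prove the two parts of Proposition~\ref{GKTDichotomy} more or less independently, drawing on the structure theory of Euclidean surfaces with conical singularities.

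For part (i), I would start from a non-singular geodesic $\gamma$ whose closure does not meet $P$. Then $\overline{\gamma}$ is a compact subset of $X_0$, on which the metric is flat, so $\gamma$ lies in a region where the geodesic flow is just the linear flow in local Euclidean charts. The key dichotomy I expect to invoke (this is classical for translation/half-translation surfaces, and the orientable ESCS case reduces to it after passing to a suitable branched cover, or can be argued directly) is that on a flat surface a geodesic direction is either \emph{completely periodic} --- every geodesic in that direction is periodic or hits a singularity --- or the directional flow is \emph{minimal} on the complement of the singular leaves, in which case $\overline{\gamma}$ has nonempty interior and the geodesic is dense in a subsurface. If $\gamma$ is recurrent but not periodic and avoids $P$ entirely, one builds a transversal and uses a first-return / Poincar\'e recurrence argument: the return map to a short transverse segment is an interval exchange (or rotation, in the cylinder case), and a non-periodic orbit forces the closure to contain a full cylinder's worth of parallel geodesics whose boundary leaves must terminate at conic points, contradicting $\overline{\gamma} \cap P = \emptyset$ unless the whole component is a single flat cylinder or torus --- in which case $\gamma$ is in fact periodic. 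So in all cases $\gamma$ is periodic. The main obstacle here is packaging this flat-surface dynamics cleanly without reproving interval-exchange theory; I would aim to cite the relevant structure result (e.g.\ from the translation surface literature, or Gutkin--Katok-type results) and only sketch the reduction.

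For part (ii), suppose $\gamma$ is a periodic geodesic avoiding $U_\epsilon$. By Lemma~\ref{defcyl} it sits inside a cylinder $\cyl_{\delta,T}$, which I enlarge to a maximal cylinder $\cyl$ by increasing $\delta$ until the boundary geodesics on both sides become singular (they must, by part (i): any leaf in $\cyl$ that is not periodic is impossible since all leaves of $\cyl$ are periodic, so maximality is obstructed only by a leaf running into $P$). That $\gamma$ avoids $U_\epsilon$ means $\gamma$ stays at distance $\geq \epsilon$ from $P$; since the core geodesic $\gamma = h(\cdot,0)$ realizes the distance $|s|$ to the leaf $h(\cdot, s)$, the portion of $\cyl$ within distance $\epsilon/2$, say, of $\gamma$ is honestly embedded and disjoint from $U_{\epsilon/2}$, giving a definite lower bound on the ``width times length'' --- i.e.\ the Euclidean area --- contributed by the collar of each such cylinder. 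The finiteness then follows from an area (or injectivity-radius) packing argument: distinct maximal cylinders can overlap, but I would show that their core collars of width $\epsilon/2$ are ``almost disjoint'' in a way that bounds their number by $C\,\vol(X)/\epsilon^2$ (times a constant accounting for the bounded multiplicity with which flat strips of a given direction can overlap on a surface of finite area). Concretely: a point of $X_0$ lies in the $\epsilon/2$-collar of at most boundedly many maximal cylinders, because at a given point there are only finitely many directions, and in each direction at most one maximal cylinder through that point, and cylinders through nearby points in nearby directions that avoid $U_\epsilon$ have commensurate lengths bounded below (a cylinder avoiding $U_\epsilon$ cannot be too short, since its boundary singularities are at mutual distance $\geq 2\epsilon$). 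The main obstacle in (ii) is making this multiplicity/packing bound precise; I expect to handle it by a compactness argument on the (compact) space of directions at points of $X \setminus U_{\epsilon/2}$, showing the map ``point $\mapsto$ maximal cylinder avoiding $U_\epsilon$ through it'' is locally constant with finite image.

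Assembling these: part (i) shows any geodesic avoiding $U_\epsilon$ is periodic (establishing property (1) of $(CC)$ for $U_\epsilon$), and part (ii) shows such geodesics lie in finitely many maximal cylinders (establishing property (2)), so $U_\epsilon$ satisfies $(CC)$; combined with the remark that every neighbourhood of $P$ contains some $U_\epsilon$ and that enlarging $U$ only removes geodesics-to-be-avoided, every neighbourhood of $P$ satisfies $(CC)$, which is what Section~\ref{four} needs.
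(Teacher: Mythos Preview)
Your plan for part~(i) has a genuine gap. The translation-surface dichotomy (completely periodic versus minimal directions) and interval-exchange machinery presuppose a globally defined directional foliation, which an \ESCS\ possesses only when every cone angle is an integer multiple of~$\pi$ (half-translation) or~$2\pi$ (translation). For a general orientable \ESCS\ --- in particular for the double of a polygon with an irrational angle --- there is no finite branched cover that is a translation surface, and ``direction'' has no global meaning: parallel transport around a conic point rotates by the cone angle. So the reduction you sketch does not go through, and citing flat-surface structure results will not close the argument. The paper instead proves a self-contained Lemma~\ref{GKT}: any strip of positive width is periodic. The mechanism is Furstenberg's uniform recurrence theorem applied to the forward limit set of the lifted geodesic in $SX_0$, combined with a geometric argument that forces the recurrent strip to align exactly with the original one (otherwise a long thin rectangle inside the strip would have to contain a conical point). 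This is the argument of~\cite{GKT} adapted to \ESCS s, and it is what you are missing.

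Your plan for part~(ii) is circular as stated. You try to bound the multiplicity with which $\epsilon/2$-collars of distinct maximal cylinders cover a point of~$X_0$ by saying ``at a given point there are only finitely many directions,'' but a uniform bound on the number of such directions is exactly the finiteness you are trying to prove. In~$X_0$ the collars genuinely overlap with no a~priori control. The paper's remedy is to lift to the unit tangent bundle: an angle estimate (Lemma~4.2 of~\cite{Hil}, also~\cite{DG}) shows that two periodic geodesics in the $\epsilon/2$-strips of distinct maximal cylinders meet at an angle~$\theta$ with $\sin\theta \geq \epsilon/\min(L_i,L_j)$. One then thickens each strip~$S_i$ to a region $V_i = \{(x,\theta): x\in S_i,\ |\theta-\theta_i|<\epsilon/(2L_i)\}\subset SX_0$; the angle bound makes the $V_i$ pairwise disjoint, each has volume $\epsilon^2/2$, and finiteness follows from $\vol(SX_0)<\infty$. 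The missing idea in your sketch is precisely this lower bound on the angle of intersection, which is what converts an uncontrolled overlap in~$X_0$ into genuine disjointness in phase space.
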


Before proving Proposition~\ref{GKTDichotomy}, we introduce some notation and definitions. 
A {\em strip} is an isometric immersion
$h~ :~ \R \times I\rightarrow X_0,$ where $I$ is a nonempty open interval and 
$\R\times I$ is equipped with the Euclidean metric. 
We will also sometimes call the image of $h$ a strip. 
The {\em width} of the strip is 
the length of the interval $I.$ 

For any strip, the mappings $\gamma_c:=h(\cdot,c)$, $c \in I$, are geodesics 
of $X_0$. Since $h$ is a local isometry and $X$ is orientable, 
if one $\gamma_c$ is periodic of length $L$ then, for any $c',$ 
$\gamma_{c'}$ is also periodic with length $L$.  

A {\em maximal} strip is a strip that cannot be extended 
to $\R\times I'$ for any open $I'$ properly containing $I$.  A strip is maximal if and only if 
$P$ intersects the closure of $h(\R\times I)$ on its left and on its right. 

For any geodesic $\gamma$ we will denote by $\tilde{\gamma}$ the geodesic lifted 
to the unit tangent bundle $SX_0$ and we denote by $\pi$ the projection 
of $SX_0$ in $X.$ We also denote by $d(.,.)$ the distance on $X.$

Proposition \ref{GKTDichotomy} is a straightforward consequence of the following lemma, which is closely related to results of  \cite{GKT}.

\begin{lem}\label{GKT}
Let $h:\R\times I\,\rightarrow \, X_0$ be a strip of positive width. Then there 
exists $L$ such that $h(t+L,s)=h(t,s).$
\end{lem}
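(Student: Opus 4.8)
The plan is to show that a strip of positive width, being a local isometry from $\R\times I$ into the flat part $X_0$, is forced to be periodic because its image stays in a \emph{bounded} region of $X$, and bounded flat regions of an \ESCS{} are too small to contain an infinite injective flat strip. Concretely, fix some $s_0 \in I$ and consider the single geodesic $\gamma = h(\cdot,s_0)$. I would first argue that $\gamma$ cannot have its closure meeting $P$: if some point of $P$ were a limit of $h(t_n,s_0)$, then since $h$ is an isometric immersion on all of $\R\times I$ and $s_0$ is an \emph{interior} point of $I$, a whole half-disk around $(t_n,s_0)$ of fixed radius $\rho = \operatorname{dist}(s_0,\partial I)/2$ maps isometrically near that point of $P$; but a neighbourhood of a conic point with cone angle $\theta \neq 2\pi$ contains no isometrically embedded flat half-disk of radius $\rho$ once we are within $\rho$ of the tip (the cone is not flat there), giving a contradiction. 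Hence $\overline{\gamma}$ is disjoint from $P$, so it lies in a \emph{compact} subset $K$ of $X_0$; by compactness there is a uniform injectivity radius $r_0 > 0$ on $K$ for the flat metric.

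Next I would exploit this uniform injectivity radius to produce the period. The curve $t \mapsto h(t,s_0)$ is a unit-speed geodesic in the flat manifold-with-uniform-injectivity-radius $K$; its lift $\widetilde\gamma$ to $SX_0$ stays in the compact set $\overline{\pi^{-1}(K)} \cap SX_0$. Since $\widetilde\gamma$ is a geodesic, $\widetilde\gamma(\R)$ is a relatively compact orbit of the geodesic flow, so there exist $t_n \to \infty$ with $\widetilde\gamma(t_n) \to (z,\zeta)$ in $SX_0$; in particular there are $m < n$ with $\widetilde\gamma(t_m)$ and $\widetilde\gamma(t_n)$ both within $r_0/4$ of $(z,\zeta)$, hence within $r_0/2$ of each other in $SX_0$. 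Now flow backwards: setting $L = t_n - t_m > 0$, the two geodesics $t \mapsto \widetilde\gamma(t)$ and $t \mapsto \widetilde\gamma(t+L)$ start (at $t = t_m$) from points at distance $< r_0/2$ in $SX_0$, so by uniqueness of geodesics within the injectivity radius — using that $X_0$ is flat and both curves remain in $K$ — they coincide for all $t$ in a neighbourhood, and by the usual open–closed (connectedness of $\R$) argument they coincide for all $t$. Thus $\gamma(t+L) = \gamma(t)$ for all $t$, i.e. $\gamma$ is $L$-periodic.

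Finally I would upgrade periodicity of the single geodesic $\gamma_{s_0}$ to the stated identity $h(t+L,s) = h(t,s)$ for all $(t,s)$. This is exactly the rigidity already used in the proof of Lemma~\ref{defcyl}: the map $(t,s) \mapsto h(t+L,s)$ is again an isometric immersion of $\R\times I$ into $X_0$ agreeing with $h$ along $s = s_0$ (and, by differentiating, agreeing to first order there since $\gamma_{s_0}$ has the same velocity at $t$ and $t+L$). Two isometric immersions of a connected flat domain into $X_0$ that agree to first order at one point agree everywhere — one checks this on the set where they agree to first order, which is open (local isometries, flatness) and closed and nonempty. Orientability of $X$ is what rules out the reflected alternative $h(t+L,s) = h(-t,-s)$-type coincidences, exactly as invoked at the end of the proof of Lemma~\ref{defcyl}. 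Hence $h(t+L,\cdot) \equiv h(t,\cdot)$, as claimed.

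The main obstacle is the first step: showing $\overline{\gamma}$ cannot touch $P$. The point is genuinely geometric — one must use that an interior line $s = s_0$ of a positive-width strip carries a full flat two-sided neighbourhood, and that no such neighbourhood of fixed size can approach a cone point (where the cone angle differs from $2\pi$), so that positive width is precisely what forces the strip away from the singular set. Once the image is confined to a compact flat piece, the rest is the standard Poincaré-recurrence-plus-uniqueness argument for geodesic flows together with the flat rigidity already exploited in Lemma~\ref{defcyl}.
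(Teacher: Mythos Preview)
Your first step (the closure of $\gamma$ avoids $P$) and your last step (upgrading periodicity of one leaf to periodicity of $h$) are fine, but the middle step --- recurrence implies periodicity --- has a genuine gap. You claim that because $\widetilde\gamma(t_m)$ and $\widetilde\gamma(t_n)$ lie within $r_0/2$ of each other in $SX_0$, ``uniqueness of geodesics within the injectivity radius'' forces $\widetilde\gamma(\cdot)$ and $\widetilde\gamma(\cdot+L)$ to coincide. But uniqueness of geodesics says only that a geodesic is determined by its initial point and direction; it says nothing about geodesics with merely \emph{close} initial data. If $\widetilde\gamma(t_m)\neq\widetilde\gamma(t_m+L)$ the two curves are simply distinct nearby geodesics, and your open--closed argument never gets started because the coincidence set is empty. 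To see that this step cannot work as written, note that it uses neither the positive width of the strip nor the set $P$: it would apply verbatim to an irrational-slope geodesic on a flat torus (where $P=\emptyset$, $X_0=X$ is compact with uniform injectivity radius), which is recurrent in $SX_0$ but not periodic.

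This is exactly the difficulty the paper's proof is built to overcome. There one passes to a \emph{uniformly} recurrent point $x$ in the forward limit set (via Furstenberg's theorem), takes the maximal strip $H$ around the geodesic through $x$, and uses maximality together with uniform recurrence to show that every $2\eps\times L$ rectangle along the boundary of $H$ contains a conic point. A Euclidean-geometry computation then shows that if the return angle $\theta_n$ of $\widetilde\gamma_0(t_n)\to x$ differed from $\pi/2$, the original conic-point-free strip $h$ would contain such a rectangle --- a contradiction. This forces \emph{exact} return ($\theta_n=\pi/2$ for large $n$), not just approximate return, and only then does periodicity follow. Your proposal skips this mechanism, which is the heart of the lemma.
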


\begin{proof}
We follow closely the ideas of \cite{GKT}. 
We may assume that $h$ is maximal and $I=(-\delta,\delta)$ 
and we will prove that $\gamma_0$ is periodic. Observe that 
for any $t$, $d(\gamma_0(t),P)\geq \delta.$ We denote by $Z\subset SX_0$ the forward 
limit set of the lifted geodesic $\tilde{\gamma}_0.$ By continuity,we have that 
$d(\pi(Z),P)\geq \delta.$ This implies first that $Z$ is compact and then 
that the geodesic flow is continuous on $Z.$ Using Furstenberg's uniform 
recurrence theorem \cite{Furst}, there exists a point $x$ 
that is uniformly recurrent in $Z.$ We denote by $G$ the geodesic emanating from $x$
(observe that ${\tilde{G}}(\R)\subset Z$). 
We also denote by $H: \br \times (-\Delta^-,\Delta^+)$ 
the maximal strip around $G.$ 
Uniform recurrence means the 
following: for any neighbourhood $\tilde{W}\subset SM_0$ that intersects 
$\tilde{G}(\R)$, there exists $L\in\R$ such that 
$$
\forall t,\exists s\in [t,t+L]~~\mbox{such that}~~(G(s),\dot{G}(s))\in \tilde{W}.
$$

The uniform recurrence and the maximality of 
the strip imply 
\begin{equation}\label{UniRec}
\forall\, \eps>0,\,\exists\, L~~\mbox{such that}~~ \forall \, t,~ 
d(H([t,t+L]\times\{\Delta^+-\eps\}),P)<2\eps.
\end{equation}
 
Indeed, by maximality, for any $\eps>0$, there exists $t_0$ such that the geodesic $\overline\gamma$ emanating from the point in phase space given by
$(G(t_0),\dot{G}(t_0)-\frac{\pi}{2})$
hits a conical point in time less than $\Delta^++\frac{\eps}{2}.$ 
In particular 
$$d(H(t_0,\Delta^+-\eps),P)\leq \frac{3\eps}{2}.$$
By continuity, we can find a neighbourhood $\tilde{V}$ of 
$(G(t_0),\dot{G}(t_0))$ such that, for any $(m,\theta)$ 
in this neighbourhood, the geodesic starting from $(m,\theta-\frac{\pi}{2})$ 
stays in the $\frac{\eps}{2}$ tubular neighbourhood of $\overline\gamma$ until time 
$\Delta^+-\eps.$ Using uniform recurrence, there exists $L$ such that, 
for any $t$, there exists $s\in [t,t+L]$ so that $(G(s),\dot{G}(s))$ belongs to 
$\tilde{V}.$ Using the preceding property we have that 
$$d(H(s,\Delta^+-\eps),H(t_0,\Delta^+-\eps))<\frac{\eps}{2}.$$  We conclude (\ref{UniRec}) using 
the triangle inequality.

Observe that (\ref{UniRec}) means that 
if we represent the strip $H$ by a vertical strip in 
$\R^2$ then we can find a vertical 
strip of width $2\eps$ (that contains the right boundary of the 
strip $H$) such that any rectangle of height $L$ contained in 
this strip contains at least one conical point.
 
We fix local coordinates near $x$ so that $x=((0,0),\frac{\pi}{2}).$ 
Since $x$ is in the forward limit set, 
there exists $t_n$ such that $\tilde{\gamma}_0(t_n)$ converges to $x.$ 
We set $\tilde{\gamma}_0(t_n)=(z_n,\theta_n).$ Represent now in 
$\R^2,$ the strip $H$ around $(0,0)$ and the strip 
$h$ around $z_n.$ By standard Euclidean geometry, for $\epsilon  < < \delta$ 
the intersection of any vertical strip of width $2\eps$ with $h$ contains a 
vertical rectangle of width $2\eps$ and height that goes to $\infty$ 
when $\theta$ goes to $\frac{\pi}{2}$ (see Figure~\ref{strips}).  Indeed, denoting by $\alpha=\frac{\pi}{2}-\theta$, this 
height is $$\frac{2\delta}{\sin|\alpha|}-\frac{2\eps}{\tan|\alpha|}.$$ 
Using (\ref{UniRec}), since there is no conical point in $h,$ this implies that 
we have $\theta_n=\frac{\pi}{2}$ for $n$ large enough. The strip $h$ 
around $z_n$ is thus represented by a vertical strip of width $2\delta.$ %If two such strips do not exactly overlap then the boundary of one 
%strip is in the interior of the second one which is impossible by maximality. Thus for $n$ large enough, $z_n$ and $z_{n+1}$ have the same abscissa and this implies that the geodesic $\gamma_0$ is periodic.  
Then maximality of both $h$ and $H$ implies that the strips coincide up to a translation in the first variable; in particular, the widths coincide, and $z_n$ is independent of $n$. But this implies that the geodesic $\gamma_0$ is periodic.  
\end{proof}

\begin{figure}
\includegraphics[height=10cm]{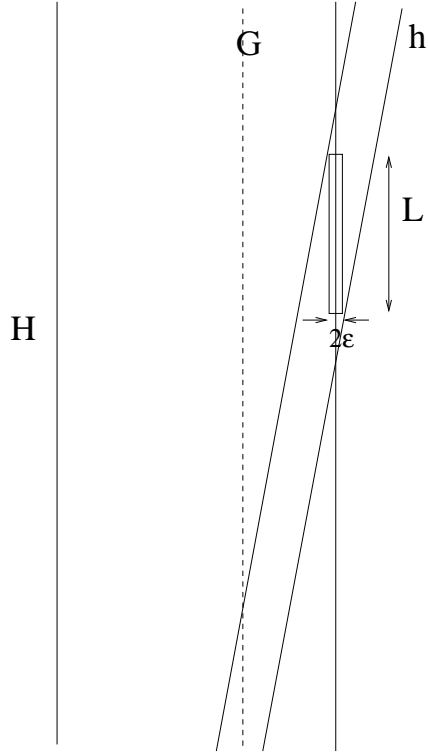}
\caption{Illustration of the argument in the proof of Lemma~\ref{GKT}. For sufficiently large $n$, if $\theta_n \neq \pi/2$ then the strip $h$ would contain a rectangle of size $2\epsilon \times L$, as illustrated. This is not possible as any such rectangle  intersects $P$.}
\label{strips}
\end{figure}

\begin{proof}[Proof of Proposition~\ref{GKTDichotomy}]

(i) Let us consider a geodesic $g$ 
such that $\bar{g(\R)}$ contains no conical point. There 
exists $\epsilon>0$ such that $\forall t, B(g(t),\epsilon)\cap P =\emptyset.$
For, otherwise, we could find sequences $\epsilon_n,~t_n,~$ and $p_n$ such that 
$d(g(t_n),p_n)< \epsilon_n$ contradicting the hypothesis. 
This implies that the geodesic $g~:~\R \rightarrow X_0$ extends to a strip 
of positive width and Lemma~\ref{GKT} concludes the proof. 

(ii) We only have to prove the finiteness property. Denote by $\cyl_i$ the maximal 
cylinders. By definition the {\em middle geodesic} of $\cyl_i$ is at distance at 
least $\eps$ of the conical points. 
So that the $\eps/2$ strip around this geodesic consists in periodic geodesics at distance at 
least $\eps/2$ of the conical 
points. Denote by $S_i$ this strip. 
The proof of Lemma 4.2 of \cite{Hil} (see also fig. $2$ of this reference 
\cite{DG}) implies that if $\gamma_i$ and $\gamma_j$ 
are two periodic geodesics in strip $S_i$ and $S_j$ respectively then, at any of 
their intersections, they make an angle $\theta$ satisfying 
$$\frac{1}{\sin \theta} \leq  \frac{\min(L_i,L_j)}{\eps}.$$   
We can now adapt the argument of \cite{DG}. 
Fo any $i$ we consider the following region $V_i$ of $SX_0,$ 
$$V_i =\{ (x,\theta),~x\in S_i, |\theta-\theta_i|< \frac{\eps}{2L_i}\}.$$
The preceding estimate implies : 
\begin{enumerate}
\item $V_i$ is isometric to $(-\eps/2,\eps/2)\times \SP_{L_i}\times (-\frac{\eps}{2L_i},\frac{\eps}{2L_i}),$ 
\item for different cylinders, the regions $V_i$ are distinct.
\end{enumerate}
The first point implies that the volume of $V_i$ is bounded away from zero independently of the cylinder, and 
the second point coupled with the fact that the unit tangent space to $X$ has finite volume yields the result.
\end{proof}

\begin{rem}
We have seen that the $\eps$ neighbourhood $U_\epsilon$ of the conical points is a control region for $X_0$. 
Since any geodesic that enters the $\eps$-neighbourhood also enters the 
annular region $\eps/2 \leq d(x,P) \leq \eps$, the union of these annular regions is also a control region. 
A similar argument also shows 
that for any $\delta>0$ and any $\eps>0$ the $\eps$-neighbourhood of the union of the normal co-bundle to 
the circles $d(x,p)=\delta$ is a control region (in the cotangent bundle!).
\end{rem}

\end{document}